\theoremstyle{plain}
\newtheorem{theorem}{Theorem}
\newtheorem{lemma}[theorem]{Lemma}
\newtheorem{proposition}[theorem]{Proposition}
\theoremstyle{definition}
\newtheorem{definition}[theorem]{Definition}
\newtheorem{example}[theorem]{Example}
\theoremstyle{remark}
\numberwithin{theorem}{section}
\title{Symmetric Graphs have symmetric Matchings}
\author{Jan Fricke}
\begin{document}

\maketitle

\begin{abstract}
  Assume that there is a free group action of automorphisms on a
  bipartite graph. If there is a perfect matching on the factor graph,
  then obviously there is a perfect matching on the
  graph. Surprisingly, the reversed is also true for amenable groups:
  if there is a perfect matching on the graph, there is
  also a perfect matching on the factor graph, i. e. a group invariant
  (``symmetric'') perfect matching on the graph.
\end{abstract}

\section{Introduction}
In 2007 we had an interesting discussion about a problem that somehow
points to Erd\H{o}s. Take $\mathbb{Z}^2$ in $\mathbb{R}^2$ and rotate
that by $45^\circ$. Is there a bijection $f$ between the points of the
original lattice and the rotated lattice, such that the distance
between $x$ and $f(x)$ is bounded? We could confirm that (and that was
already known), but is spurred us to find the smallest bound. Klaus
Nagel found a nice construction of a bijection, and in a talk in July
2008 he proved that this one has the best bound, which is
$\sqrt{5}\cdot\sin\tfrac\pi8\approx0.8557$. Unfortunately, he never
published that, and I made no notices.

Independent from that I tried to analyze the dependence of that bound
from rotation angle. And, there are in fact two qualitatively
different cases. If the rotation matrix is irrational, the ``twin
lattice'' is aperiodic, and the bound doesn't change if one of the
lattices is translated. If the rotation matrix is rational, the ``twin
lattice'' is periodic, and the bound depends on the rotation
center. Nevertheless there is some continuous dependence of that bound
from the rotation angle, and it is a nice fractal function. Since I
cannot prove all of that it is still unpublished, but I will write a
survey paper soon.

In doing numerical experiments on that problem, I faced the problem of
finding a matching in an infinite but periodic bipartite graph. So I
only considered periodic matchings which could be done by the
well-known Hopcroft-Karp algorithm. But does that always work?
Consider the following graph with a rotation symmetry:
\begin{center}
  \begin{tikzpicture}
    \fill (3,1) circle (0.1);
    \fill (3,-1) circle (0.1);
    \fill (4,0) circle (0.1);
    \fill (-3,1) circle (0.1);
    \fill (-3,-1) circle (0.1);
    \fill (-4,0) circle (0.1);
    \draw (3,1) -- (4,0) -- (3,-1) -- (3,1) -- (-3,1)  -- (-4,0) --
    (-3,-1) -- (-3,1) (-3,-1) -- (3,-1);
  \end{tikzpicture}
\end{center}
It has two different perfect matchings, but none of them has the
rotation symmetry.

So I was thinking about that in 2009 while skiing at the Kvitfjell in
Norway, when I found out, that it works on bipartite graphs using the
Theorem of Hall, because the ``boundary'' of a suitable finite subset
of $\mathbb{Z}^2$ is ``small'' relative to the ``interior''. So I
could prove a general result for groups with subexponential growth. I
suspected that it has something to do with amenability and
unsuccessful tried to find a counterexample for the free group of two
generators. So it remained unpublished.

In April 2016 I wanted to present that proof in a lecture, and
suddenly I found the slight modification of the proof for showing it
for all amenable groups, and the paradoxical decomposition gave me the
right hint for counterexamples for all non-amenable groups. You will
find that proof and the counterexamples in the last section. In the
sections in between I collect the well-known facts about the Theorem
of Hall (section 2), define symmetry of a bipartite graph and the
factor graph (section 3), and cite equivalent characterizations of
amenability and a slight modification of one of them (section 4).

I want to thank Rainer Rosenthal, who still patiently waits for Klaus
Nagel's paper and my survey paper.

\section{Matchings}

A \emph{bipartite graph} $(A,B,E)$ is a pair of sets $A$ and $B$ with
a subset $E\subseteq A\times B$. The elements of $A$ and $B$ are
\emph{vertices}, the elements of $E$ are \emph{edges}. For each subset
$X\subseteq A$ we define $E(X)=\{y\in B\mid (x,y)\in E\}$ and for each
$Y\subseteq B$ we define $E(Y)=\{x\in A\mid (x,y)\in E\}$. Such a
bipartite graph is called \emph{locally finite}, if for any finite
$X\subseteq A$ also $E(X)$ is finite, and for any finite $Y\subseteq
B$ also $E(Y)$ is finite. (Equivalently, for any $x\in A$ and $y\in B$
the sets $E(\{x\})$ and $E(\{y\})$ are finite.)

A bipartite graph $(A,B,E)$ fulfills the \emph{left Hall condition},
if $|X|\le |E(X)|$ for any finite $X\subseteq A$, and the \emph{right
  Hall condition}, if $|Y|\le |E(Y)|$ for any finite $Y\subseteq
B$. It fulfills the \emph{Hall condition}, if the left and right Hall
condition are fulfilled.

A \emph{matching} $M$ of a bipartite graph is a subset $M\subseteq E$,
such that for all $(x_1,y_1),(x_2,y_2)\in M$ there is
$(x_1,y_1)=(x_2,y_2)$ whenever $x_1=x_2$ or $y_1=y_2$. Define
$A(M)=\{x\in A\mid \exists y\in B:(x,y)\in M\}$ and
$B(M)=\{y\in B\mid \exists x\in A:(x,y)\in M\}$. A matching $M$ is
called \emph{perfect}, if $A(M)=A$ and $B(M)=B$.

\begin{theorem}[Hall]
  For a locally finite bipartite graph there are equivalent:
  \begin{enumerate}
  \item The graph fulfills the Hall condition.
  \item The graph has a perfect matching.
  \end{enumerate}
\end{theorem}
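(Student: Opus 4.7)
The direction $(2)\Rightarrow(1)$ is immediate: a perfect matching injects any finite $X\subseteq A$ into $E(X)$, and likewise for finite $Y\subseteq B$.

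For $(1)\Rightarrow(2)$ my plan is first to establish a one-sided lemma: the left Hall condition together with local finiteness yields a matching $M_A$ with $A(M_A)=A$. Encode matchings as points of $\{0,1\}^E$ with the product topology, which is compact by Tychonoff. ``Being a matching'' is a closed condition because at each vertex the at-most-one-incident-edge constraint involves only the finitely many incident edges, and for each $x\in A$ the set of matchings covering $x$ is clopen since $E(\{x\})$ is finite. By the finite intersection property it suffices to produce, for every finite $X\subseteq A$, a matching saturating $X$; the induced finite bipartite graph on $X\cup E(X)$ inherits the left Hall condition (because $E(Y)\subseteq E(X)$ for $Y\subseteq X$), so the classical finite Hall theorem supplies one. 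A symmetric argument gives $M_B$ with $B(M_B)=B$.

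To stitch $M_A$ and $M_B$ into a perfect matching I would consider the subgraph on $A\cup B$ with edge set $M_A\cup M_B$. Every vertex has degree at most two (each matching contributes at most one incident edge) and at least one (saturation), so its connected components are paths and cycles. A parity argument rules out finite paths of length $\ge 2$: internal vertices have one edge in $M_A\setminus M_B$ and one in $M_B\setminus M_A$, so the edge labels along the path alternate, while the bipartite sides $A,B,A,B,\ldots$ also alternate, and at a degree-$1$ endpoint $v$ the unique edge being in $M_A\setminus M_B$ forces $v\in A$ whereas being in $M_B\setminus M_A$ forces $v\in B$ (an edge in $M_A\cap M_B$ would make the whole component a single edge). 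Reconciling these parities at both endpoints is impossible. The surviving components --- isolated edges of $M_A\cap M_B$, even cycles, and one- or two-way infinite paths --- each carry an alternating set of edges saturating all of their vertices (rooted at the endpoint in the one-way case); uniting these choices over components yields a perfect matching of the whole graph.

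The main obstacle is this final gluing step: compactness delivers one-sided saturations cheaply, but combining them into a single perfect matching requires the parity bookkeeping above, and in particular the observation that finite paths of length $\ge 2$ cannot appear in $M_A\cup M_B$.
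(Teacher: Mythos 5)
Your proof is correct, and in fact it is more than the paper offers: the paper states Hall's theorem for locally finite bipartite graphs as a known fact and gives no proof at all, so there is nothing to compare against. Your route is the standard one for the two-sided infinite marriage theorem: first the one-sided statement via compactness of $\{0,1\}^E$ (local finiteness is exactly what makes the ``covers $x$'' sets clopen rather than merely open, and what makes $X\cup E(X)$ finite so the classical Hall theorem applies), then the Cantor--Bernstein-style stitching of $M_A$ and $M_B$ along the components of $M_A\cup M_B$. Your parity argument excluding finite paths of length $\ge 2$ is the right key observation and is carried out correctly: the edge at a degree-one endpoint must lie in $M_A$ if the endpoint is in $A$ (else $M_A$ would not saturate it) and in $M_B$ if it is in $B$, and combined with the alternation of edge labels and of sides along the path this is contradictory at one of the two ends. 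The remaining components (isolated $M_A\cap M_B$ edges, even cycles, one- and two-way infinite paths) each admit a perfect matching by the alternating choice you describe, rooted at the endpoint in the one-way case. The only ingredients you take on faith are Tychonoff and the finite Hall theorem, which is entirely reasonable given that the paper itself cites the infinite version without proof.
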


\section{Symmetry}

\begin{definition}
  Let $G$ be a group and $(A,B,E)$ a bipartite graph. Then the graph
  is called \emph{$G$-symmetric}, if
  \begin{itemize}
  \item $G$ acts free on $A$,
  \item $G$ acts free on $B$,
  \item $(x,y)\in E \iff (gx,gy)\in E$ for all $x\in A$, $y\in B$ and
    $g\in G$.
  \end{itemize}
  It is called \emph{proper $G$-symmetric}, if over-more
  \begin{itemize}
  \item $(x,y)\in E$ and $(gx,y)\in E$ implies $g$ is the
    identity, or equivalently
  \item $(x,y)\in E$ and $(x,gy)\in E$ implies $g$ is the identity.
  \end{itemize}
\end{definition}

\begin{definition}
  A matching $M$ on a $G$-symmetric bipartite graph is called
  \emph{$G$-symmetric}, if $(x,y)\in M$ implies $(gx,gy)\in M$ for any
  $g\in G$.
\end{definition}

A $G$-symmetric matching can also be described by a matching on some
factor graph:

\begin{definition}
  Let $(A,B,E)$ a $G$-symmetric bipartite graph. Then the \emph{factor
    graph} $(\tilde{A},\tilde{B},\tilde{E})$ is defined by:
  \begin{itemize}
  \item $\tilde{A}=A/G=\{Gx\mid x\in A\}$,
  \item $\tilde{B}=B/G=\{Gy\mid y\in B\}$,
  \item $\tilde{E}=\{(Gx,Gy)\mid (x,y)\in E\}$.
  \end{itemize}
\end{definition}

\begin{lemma}
  If $(A,B,E)$ is locally finite, then also 
  $(\tilde{A},\tilde{B},\tilde{E})$ is locally finite.
\end{lemma}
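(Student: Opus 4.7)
The plan is to verify local finiteness directly from the definitions, using the $G$-symmetry of the edge relation. I would prove that for any orbit $Gx \in \tilde{A}$, the set $\tilde{E}(\{Gx\})$ is finite; by the obviously symmetric argument (switching the roles of $A$ and $B$), the same follows for any $Gy \in \tilde{B}$.

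First I would unpack the definition of $\tilde{E}$: a class $Gy$ lies in $\tilde{E}(\{Gx\})$ precisely when there exist representatives $x' \in Gx$ and $y' \in Gy$ with $(x',y') \in E$. Writing $x' = gx$, the third axiom of $G$-symmetry gives $(gx, y') \in E \iff (x, g^{-1}y') \in E$, and $g^{-1}y'$ still lies in $Gy$. Hence the condition collapses to: there exists some $y'' \in Gy$ with $(x, y'') \in E$, i.e.\ $E(\{x\}) \cap Gy \neq \emptyset$.

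This identifies $\tilde{E}(\{Gx\})$ with the image of $E(\{x\})$ under the quotient map $B \to \tilde{B}$, $y \mapsto Gy$. Since $(A,B,E)$ is locally finite, $E(\{x\})$ is a finite set, and its image is therefore also finite. A verbatim dual argument, using the alternative formulation $(x', y') \in E \iff (g x', g y') \in E$ from the $B$-side, handles any $Gy \in \tilde{B}$.

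I do not expect a genuine obstacle here; this is essentially a bookkeeping exercise. The one spot that wants care is the reduction step, where one must observe that fixing a representative $x$ on one side loses no generality precisely because the $G$-action permutes the edges; without invoking $G$-symmetry one could not replace ``some $x' \in Gx$'' by ``the chosen $x$.''
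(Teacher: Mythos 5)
Your proposal is correct and follows essentially the same route as the paper: both reduce $\tilde{E}(\{Gx\})$ to the image of $E(\{x\})$ under the quotient map $y\mapsto Gy$ by using the $G$-invariance of $E$ to move the group element from the $A$-side to the $B$-side, and then conclude finiteness. No discrepancies to report.
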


\begin{proof}
  Let $(A,B,E)$ be locally finite. Then
  \begin{align*}
    \tilde{E}(\{Gx\})&=\{Gy\mid (Gx,Gy)\in\tilde{E}\}
    =\{Gy\mid (gx,y)\in E \text{ and } g\in G\} \\
    &=\{Gy\mid (x,g^{-1}y)\in E \text{ and } g\in G\}
    =\{Gy'\mid (x,y')\in E\},
  \end{align*}
  hence finite. The same holds for $\tilde{E}(\{Gy\})$.
\end{proof}


\begin{proposition}
  The map $M\mapsto\tilde{M}=\{(Gx,Gy)\mid (x,y)\in M\}$ is a
  surjection between the $G$-symmetric matchings $M$ of $(A,B,E)$ and
  the matchings $\tilde{M}$ of $(\tilde{A},\tilde{B},\tilde{E})$.
  Is $(A,B,E)$ even proper $G$-symmetric this map is a bijection.
  Over-more, $M$ is perfect if and only if $\tilde{M}$ is perfect.
\end{proposition}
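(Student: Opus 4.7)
The plan is to verify four things in sequence: well-definedness of the map (i.e., $\tilde M$ really is a matching), surjectivity, injectivity under properness, and preservation of perfectness in both directions.

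First I would check well-definedness. Given a $G$-symmetric matching $M$, containment $\tilde M\subseteq\tilde E$ is immediate; the matching property reduces to showing that if $(Gx_1,Gy_1),(Gx_2,Gy_2)\in\tilde M$ with $Gx_1=Gx_2$, then $Gy_1=Gy_2$. Writing $x_2=gx_1$, the $G$-symmetry of $M$ gives $(x_2,gy_1)=(gx_1,gy_1)\in M$, and since $M$ is a matching we get $gy_1=y_2$, hence $Gy_1=Gy_2$. The $y$-side argument is symmetric.

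For surjectivity, take any matching $\tilde M$ of the factor graph. For each edge $e=(\alpha,\beta)\in\tilde M$ choose a representative $(x_e,y_e)\in E$ with $Gx_e=\alpha$, $Gy_e=\beta$ (possible by definition of $\tilde E$), and set $M=\bigcup_{e\in\tilde M}\{(gx_e,gy_e)\mid g\in G\}$. This $M$ is $G$-symmetric by construction and maps to $\tilde M$. To see it is a matching, suppose $(g_1x_{e_1},g_1y_{e_1})$ and $(g_2x_{e_2},g_2y_{e_2})$ share an $A$-vertex. Then $Gx_{e_1}=Gx_{e_2}$, so $e_1,e_2$ share a vertex in $\tilde M$, forcing $e_1=e_2$; freeness of the $G$-action on $A$ then yields $g_1=g_2$, and the two pairs coincide.

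For injectivity under proper $G$-symmetry, suppose $\tilde M_1=\tilde M_2$ and $(x,y)\in M_1$. Pick $(x',y')\in M_2$ with $Gx'=Gx$, $Gy'=Gy$, say $x'=gx$, $y'=g'y$. Both $(x,y)$ and $(gx,g'y)$ lie in $E$; applying $g^{-1}$ via the $G$-action on the graph gives $(x,g^{-1}g'y)\in E$, so proper $G$-symmetry forces $g^{-1}g'$ to be the identity, i.e.\ $g=g'$. Hence $(gx,gy)\in M_2$, and $G$-symmetry of $M_2$ yields $(x,y)\in M_2$. Symmetry in the roles of $M_1,M_2$ gives $M_1=M_2$.

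Finally, perfectness transfers both ways essentially by unwinding definitions: every $x\in A$ has its orbit $Gx$ covered by $\tilde M$ iff some $(gx,y)\in M$ exists, which by $G$-symmetry of $M$ is equivalent to $x\in A(M)$; the $B$-side is analogous. The only step with any real content is the use of properness in the injectivity argument — everywhere else the free action of $G$ on $A$ and $B$ together with the $G$-symmetry of $M$ does the bookkeeping automatically.
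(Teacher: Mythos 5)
Your proof is correct and complete in all essentials. The paper states this proposition without giving any proof, so there is no argument of the author's to compare against; your four checks (well-definedness of $\tilde M$ as a matching, surjectivity by taking the $G$-orbit of one chosen representative edge per factor edge, injectivity from the proper-symmetry condition, and the two-way transfer of perfectness using $G$-symmetry of $M$) constitute exactly the verification the author leaves to the reader, and each step is carried out correctly — in particular you rightly isolate properness as the only hypothesis with real content, used solely for injectivity, with freeness of the action and $G$-symmetry of $M$ doing the rest. The one point you elide is the $B$-vertex case in the surjectivity step's matching check, which requires freeness of the action on $B$ and the matching property of $\tilde M$ on the $\tilde B$-side, but it is verbatim analogous to the $A$-vertex case you wrote out, so this is a cosmetic omission rather than a gap.
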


\section{Amenable groups}
There are some equivalent definitions of amenable groups using means
on groups or invariant probability measures. Since that properties
doesn't matter here, we just choose that one:

\begin{definition}
  A group $G$ is called \emph{amenable}, if there
  is an invariant finitely additive probability measure on $G$.
\end{definition}

The property ``amenable'' is equivalent to some other nice properties
of the group. In particular, we need the F\o{}lner condition and the
paradoxical decomposition. 

\begin{definition}
  For a group $G$ note by $\mathcal{F}$ the family of all non-empty
  finite subsets.

  A group $G$ satisfies the \emph{F\o{}lner condition}, if for any
  $\varepsilon>0$ and any $U\in\mathcal{F}$ there is a
  $F\in\mathcal{F}$ with $|F\setminus Fg|\le
  \varepsilon\cdot |F|$ for any $g\in U$.

  A group $G$ has a \emph{paradoxical decomposition}, if there is a
  $F\in\mathcal{F}$ and a decomposition 
  \begin{equation*}
    G=\bigsqcup_{g\in F}A_g=\bigsqcup_{g\in F}B_g=
    \bigsqcup_{g\in F}A_gg \sqcup \bigsqcup_{g\in F}B_gg.
\end{equation*}
\end{definition}

\begin{theorem}[F\o{}lner, Tarski]
  For a group $G$ are equivalent:
  \begin{enumerate}
  \item $G$ is amenable.
  \item $G$ satisfies the F\o{}lner condition.
  \item $G$ has no paradoxical decomposition.
  \end{enumerate}
\end{theorem}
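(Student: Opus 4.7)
The plan is a cyclic chain of implications $(1)\Rightarrow(3)\Rightarrow(2)\Rightarrow(1)$, with the final implication being the combinatorial core and the other two essentially automatic.

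For $(1)\Rightarrow(3)$, I argue by contradiction. If $\mu$ is a $G$-invariant finitely additive probability measure and a paradoxical decomposition exists, then applying $\mu$ to the first two disjoint decompositions of $G$ yields $\sum_{g\in F}\mu(A_g)=\sum_{g\in F}\mu(B_g)=1$, while applying it to the third and using right-invariance $\mu(A_gg)=\mu(A_g)$ and $\mu(B_gg)=\mu(B_g)$ yields $\sum_{g\in F}\mu(A_g)+\sum_{g\in F}\mu(B_g)=1$, so $2=1$. For $(2)\Rightarrow(1)$, I use weak-$*$ compactness: directing the symmetric pairs $(U,\varepsilon)$ by $U\subseteq U'$ and $\varepsilon\ge\varepsilon'$, pick a F\o{}lner set $F_{U,\varepsilon}$ for each and form $\mu_{U,\varepsilon}(A)=|A\cap F_{U,\varepsilon}|/|F_{U,\varepsilon}|$. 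These lie in the weak-$*$ compact unit ball of $\ell^\infty(G)^*$; any cluster point $\mu$ is a finitely additive probability measure, and for fixed $g\in G$ one has eventually
\begin{equation*}
  |\mu_{U,\varepsilon}(A)-\mu_{U,\varepsilon}(Ag)|\le|F_{U,\varepsilon}\triangle F_{U,\varepsilon}g^{-1}|/|F_{U,\varepsilon}|\le 2\varepsilon,
\end{equation*}
so $\mu$ is invariant.

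For $(3)\Rightarrow(2)$, I prove the contrapositive. Failure of F\o{}lner means: for some $\varepsilon>0$ and $U\in\mathcal F$, every finite $F$ admits $g\in U$ with $|F\setminus Fg|>\varepsilon|F|$. Setting $U'=U\cup\{1\}$ then gives $|FU'|\ge(1+\varepsilon)|F|$; iterating, $V:=(U')^n\ni 1$ satisfies $|FV|\ge 2|F|$ for sufficiently large $n$. Now consider the locally finite bipartite graph with left class $G\times\{1,2\}$, right class $G$, and edges $\{((x,i),xv):v\in V,\ i\in\{1,2\}\}$. The bound $|FV|\ge 2|F|$ gives the left Hall condition, and $1\in V$ gives the right Hall condition, so Hall's theorem produces a perfect matching. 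It encodes two injections $\varphi_i(x)=x\cdot v_i(x)$ with $v_i(x)\in V$ and $\varphi_1(G)\sqcup\varphi_2(G)=G$. Setting $A_v=\{x:v_1(x)=v\}$ and $B_v=\{x:v_2(x)=v\}$ for $v\in V$ then gives the paradoxical decomposition with parameter $F=V$.

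The main obstacle is $(3)\Rightarrow(2)$: amplifying the single-generator F\o{}lner failure into the uniform doubling bound $|FV|\ge 2|F|$, and packaging a Hall matching of the doubling graph into a paradoxical decomposition indexed by a finite $F\subseteq G$. This is the only step with nontrivial combinatorial content, and it is precisely where Hall's theorem feeds back into the amenability story.
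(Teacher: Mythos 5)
Your proposal is correct, but there is nothing in the paper to compare it against: the paper states this as the classical theorem of F\o{}lner and Tarski and gives no proof at all, deferring implicitly to the cited reference (Ceccherini-Silberstein and Coornaert). Taken on its own merits, your cyclic argument $(1)\Rightarrow(3)\Rightarrow(2)\Rightarrow(1)$ is sound. The measure-theoretic contradiction $2=1$ for $(1)\Rightarrow(3)$ and the weak-$*$ cluster point construction for $(2)\Rightarrow(1)$ are the standard arguments and check out; note only that the paper's definition of ``invariant'' does not specify a side, and your argument consistently produces and uses a \emph{right}-invariant measure, which matches the right-translation conventions in the paper's F\o{}lner condition and paradoxical decomposition (and is harmless, since left and right amenability coincide via $g\mapsto g^{-1}$). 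The contrapositive of $(3)\Rightarrow(2)$ is the genuinely nontrivial step and your amplification is correct: failure of F\o{}lner for $(U,\varepsilon)$ gives $|FU'|\ge(1+\varepsilon)|F|$ for $U'=U\cup\{1\}$ and every finite $F$, iteration gives a finite $V$ with $|FV|\ge2|F|$ uniformly, both Hall conditions for the doubling graph on $G\times\{1,2\}$ versus $G$ follow (the left one from $|X_1|+|X_2|\le 2|X_1\cup X_2|\le|(X_1\cup X_2)V|$, the right one from $1\in V$), and unwinding a perfect matching into the sets $A_v,B_v$ indexed by $v\in V$ yields exactly a paradoxical decomposition in the paper's sense with parameter $F=V$ (allowing empty pieces). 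It is a pleasant feature of your route that Hall's theorem, the engine of the paper's main result, also drives the Tarski half of this equivalence; if this proof were to be included, that is the direction worth writing out in full, while the other two implications can remain brief.
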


Both of that properties are interesting here, but we need a slight
modification of the F\o{}lner condition:

\begin{proposition}
  A group $G$ satisfies the F\o{}lner condition if and only if
  \begin{equation*}
    \inf_{F\in\mathcal{F}} \frac{|FU|}{|F|} = 1
  \end{equation*}
  for any $U\in\mathcal{F}$, where $\mathcal{F}$ is the family of all
  non-empty finite subsets of $G$.
\end{proposition}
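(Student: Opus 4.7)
The plan is to translate both conditions into statements about the ``boundary'' $|FU\setminus F|$. Two elementary observations do all the work. First, since left translation by $g$ is a bijection of $G$, $|Fg|=|F|$, and therefore $|F\setminus Fg|=|Fg\setminus F|$. Second, whenever $e\in U$ one has $F\subseteq FU$ and hence $|FU|/|F|=1+|FU\setminus F|/|F|$. I would pass to $U':=U\cup\{e\}$ at the outset: this is harmless in both formulations, since the F\o{}lner inequality at $g=e$ is trivially true, and $FU\subseteq FU'$ only enlarges the ratio in the statement.

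For the ``only if'' direction, fix $U\ni e$ and $\varepsilon>0$. I apply the F\o{}lner condition to $U$ with tolerance $\varepsilon/|U|$, obtaining $F$ with $|F\setminus Fg|\le\varepsilon|F|/|U|$ for every $g\in U$. The union bound $FU\setminus F\subseteq\bigcup_{g\in U}(Fg\setminus F)$, combined with the identity $|Fg\setminus F|=|F\setminus Fg|$, gives $|FU\setminus F|\le\varepsilon|F|$, so $|FU|/|F|\le 1+\varepsilon$; the trivial lower bound $|FU|\ge|Fg|=|F|$ then forces the infimum to equal $1$.

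The converse is shorter: for $U\ni e$ and $\varepsilon>0$, pick $F$ with $|FU|/|F|\le 1+\varepsilon$, which means $|FU\setminus F|\le\varepsilon|F|$. For every $g\in U$, $Fg\subseteq FU$, whence $|F\setminus Fg|=|Fg\setminus F|\le|FU\setminus F|\le\varepsilon|F|$, the F\o{}lner condition for $U$. There is no serious obstacle; the only point demanding care is the enlargement $U\mapsto U\cup\{e\}$, which is needed because the ratio reformulation implicitly uses $F\subseteq FU$ to convert ``quotient close to $1$'' into ``boundary small''.
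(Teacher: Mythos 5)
Your proof is correct and takes essentially the same route as the paper's: the same application of the F\o{}lner condition with tolerance $\varepsilon/|U|$, the same union bound for $FU\setminus F$ over $g\in U$ via the identity $|F\setminus Fg|=|Fg\setminus F|$, and the same containment $Fg\subseteq FU$ in the converse direction. The only cosmetic difference is that you normalize to $e\in U$ once at the outset, whereas the paper inserts $U\cup\{e\}$ only in the converse and handles the forward direction by writing $|FU|=|FU\setminus F|+|FU\cap F|$ without assuming $F\subseteq FU$.
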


\begin{proof}
  Note that $|F|\le|FU|$ for all $F,U\in\mathcal{F}$, so
  $\inf\limits_{F\in\mathcal{F}} \frac{|FU|}{|F|} \ge 1$.

  ``$\Rightarrow$'': Let $U\in\mathcal{F}$ and $\varepsilon>0$. By the
  F\o{}lner condition there is a set $F\in\mathcal{F}$ such that
  \begin{equation*}
    |F\setminus Fg|\le \frac{\varepsilon}{|U|} \cdot |F|
  \end{equation*}
  for any $g\in U$. Since $|F|=|Fg|$ we have $|F\setminus
  Fg|=|Fg\setminus F|$, hence
  \begin{equation*}
    |FU\setminus F|=
    \left|\left(\bigcup_{g\in U}Fg\right)\setminus F\right|=
    \left|\bigcup_{g\in U}(Fg\setminus F)\right|
    \le \varepsilon\cdot |F|.
  \end{equation*}
  Now
  \begin{equation*}
    |FU| = |(FU\setminus F)\cup (FU \cap F)|
    = |FU\setminus F| + |FU \cap F|
    \le \varepsilon\cdot |F| + |F|
  \end{equation*}
  implies the assertion.

  ``$\Leftarrow$'': Let $U\in\mathcal{F}$ and $\varepsilon>0$. Then
  there is a set $F\in\mathcal{F}$ such that
  \begin{equation*}
    |F(U\cup\{e\})|/|F|<1+\varepsilon.
  \end{equation*}
  For any $g\in U$ we have
  \begin{equation*}
    |F(U\cup\{e\})| = |FU \cup F| = |(FU\setminus F) \cup F|
    =|FU\setminus F|+|F|\ge|Fg\setminus F|+|F|.
  \end{equation*}
  This implies $|F\setminus Fg|=|Fg\setminus F|<\varepsilon\cdot|F|$.
\end{proof}

\begin{example}
  Finite groups are amenable.

  Solvable groups are amenable, in particular abelian and nilpotent
  groups are amenable.

  Any subgroup of an amenable group is amenable.

  If $U$ is an amenable normal subgroup of $G$, and $G/U$ is amenable
  too, then $G$ is amenable.

  Any group with subexponential growth is amenable.
\end{example}

\begin{example}
  Any free group on at least two generators is not amenable.

  Any group that has a non-amenable subgroup is not amenable,
  e.g. $\operatorname{SL}(n,\mathbb{Z})$ for $n\ge 2$, since it has a
  subgroup isomorphic to the free group on two generators.
\end{example}

\section{Symmetric graphs have symmetric matchings}
Now we can formulate and proof the main result:

\begin{theorem}
  Let $(A,B,E)$ a locally finite $G$-symmetric bipartite graph, where
  $G$ is amenable. Then the following properties are
  equivalent.
  \begin{enumerate}
  \item $(A,B,E)$ has a perfect matching.
  \item $(A,B,E)$ has a perfect $G$-symmetric matching.
  \end{enumerate}
\end{theorem}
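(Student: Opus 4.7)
The plan is to reduce the problem to Hall's theorem applied to the factor graph. The direction (2) $\Rightarrow$ (1) is immediate: a $G$-symmetric perfect matching is in particular a perfect matching. For the converse I would show that the factor graph $(\tilde{A},\tilde{B},\tilde{E})$ satisfies the Hall condition; since it is locally finite by the lemma, Hall's theorem will then produce a perfect matching on the factor graph, and the surjection from the earlier proposition will lift it to a $G$-symmetric perfect matching of $(A,B,E)$.

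To verify the left Hall condition on $(\tilde{A},\tilde{B},\tilde{E})$, I would fix a finite $\tilde{X}\subseteq\tilde{A}$, choose representatives $x_1,\ldots,x_n\in A$ of the orbits in $\tilde{X}$, and enumerate $\tilde{E}(\tilde{X})$ as $Gy_1,\ldots,Gy_m$. Local finiteness of $(A,B,E)$ lets me pick a finite $U\subseteq G$ such that every neighbour $y\in E(\{x_i\})$ has the form $y=uy_j$ for some $u\in U$ and some $j$. For any finite $F\subseteq G$, setting $FX=\{gx_i:g\in F,\ 1\le i\le n\}$, freeness of the $G$-action on $A$ gives $|FX|=n|F|$, and each neighbour of $gx_i$ has the form $(gu)y_j$ with $gu\in FU$, so freeness on $B$ yields $|E(FX)|\le m|FU|$.

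Assuming (1), the original graph satisfies the Hall condition, so $n|F|=|FX|\le|E(FX)|\le m|FU|$, hence $|FU|/|F|\ge n/m$ for every finite $F$. The modified F\o{}lner condition from the previous proposition gives $\inf_F|FU|/|F|=1$, forcing $n\le m$, i.e.\ $|\tilde{X}|\le|\tilde{E}(\tilde{X})|$. The right Hall condition on the factor graph follows by the same argument with the roles of $A$ and $B$ interchanged, which finishes the reduction.

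The main obstacle, and the step where the group-theoretic content concentrates, is the linear upper bound $|E(FX)|\le m|FU|$ with $U$ depending only on the finitely many orbit representatives and not on $F$; this is exactly where local finiteness is needed, and it is also the reason the modified F\o{}lner condition (in the form $\inf_F|FU|/|F|=1$) is more convenient here than the classical formulation in terms of $|F\setminus Fg|$.
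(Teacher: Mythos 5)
Your proposal is correct and follows essentially the same route as the paper: reduce to the Hall condition on the factor graph, bound $E(FX)\subseteq FUY$ via a finite $U$ obtained from local finiteness, use freeness of the action to count $|FX|=|F|\cdot|\tilde X|$ and $|E(FX)|\le|FU|\cdot|\tilde E(\tilde X)|$, and conclude with the modified F\o{}lner condition $\inf_F|FU|/|F|=1$. The only difference is presentational (explicit orbit representatives $x_1,\dots,x_n$ and $y_1,\dots,y_m$ instead of representing sets $X$ and $Y$), so no further changes are needed.
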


Since the reverse direction is obvious, there is only one direction to
show.
\begin{proof}
  Let $\tilde{X}\subseteq\tilde{A}$ be some finite subset in the
  factor graph. Set $\tilde{Y}=\tilde{E}(\tilde{X})$. Then there are
  representing sets $X\subseteq A$ and $Y\subseteq B$, i.e.
  \begin{align*}
    \tilde{X} &= \{Gx\mid x\in X\}, &
    \tilde{Y} &= \{Gy\mid y\in Y\}, &
    |X| &= |\tilde{X}|, &
    |Y| &= |\tilde{Y}|.
  \end{align*}
  Now for any pair $(x,y)\in E(X)$ there is some $g\in G$ with
  $gy\in Y$, so there is a $U\in\mathcal{F}$ with
  $E(X)\subseteq UY$. Since $E$ is $G$-invariant, we conclude
  $E(FX)\subseteq FUY$ for any $F\in\mathcal{F}$.

  By the left Hall condition, the free action of $G$, and the modified
  F\o{}lner condition this implies
  \begin{equation*}
    |F|\cdot|X|\le |FU|\cdot |Y|
    \implies
    |X|\le \frac{|FU|}{|F|}\cdot |Y|
    \implies
    |X| \le |Y|
    \implies
    |\tilde{X}| \le |\tilde{Y}|    
  \end{equation*}
  which proves the left Hall condition in the factor graph. The right
  Hall condition is shown the same way, so the assertion is proved.
\end{proof}

\begin{proposition}
  If $G$ is not amenable, then there is a locally finite proper
  $G$-symmetric bipartite graph $(A,B,E)$ with a perfect matching,
  that admits no $G$-symmetric perfect matching.
\end{proposition}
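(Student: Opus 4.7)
By the Folner--Tarski theorem, the non-amenability of $G$ provides a paradoxical decomposition: a finite $F \subseteq G$ together with partitions $\{A_g\}_{g\in F}$, $\{B_g\}_{g\in F}$ of $G$ such that $G = \bigsqcup_{g\in F} A_g g \sqcup \bigsqcup_{g\in F} B_g g$. This is precisely the statement that $G$ admits two disjoint ``copies'' inside itself via piecewise right-translations by elements of $F$. My plan is to encode this duplication as a perfect matching in a proper $G$-symmetric bipartite graph whose factor graph fails the Hall condition.

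I would take $A$ to be a disjoint union of two copies of $G$ (with the left regular action) and $B$ a disjoint union of $|F|$ copies of $G$ indexed by $F$. For each pair $(i,g) \in \{1,2\} \times F$, include the single $G$-orbit of edges $((x,i), (xg,g))$, $x \in G$; each orbit pair then carries exactly one edge, so the graph is locally finite and proper $G$-symmetric. Having arranged $|F|\geq 3$ (always possible, e.g. by adjoining empty pieces to the decomposition), the factor graph has $|\tilde A| = 2 < |F| = |\tilde B|$, so the right Hall condition fails on $\tilde Y = \tilde B$, the factor graph admits no perfect matching, and hence by the earlier proposition on the correspondence $M\mapsto\tilde M$ the original graph admits no $G$-symmetric perfect matching.

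It remains to exhibit a (necessarily non-symmetric) perfect matching on the original graph, which is where the paradoxical decomposition enters essentially. The natural rule is to match $(x,1)$ with $(xg,g)$ where $g\in F$ is the unique index with $x\in A_g$, and $(x,2)$ with $(xg,g)$ where $g$ is the unique index with $x\in B_g$; injectivity on the $A$-side is automatic from the disjointness of the cells. The main obstacle I foresee is \emph{surjectivity} onto $B$: under this naive rule the vertex $(y,g)\in B$ is matched only when $y\in A_g g\cup B_g g$, which is a proper subset of $G$, so many $B$-vertices are missed. To overcome this I would refine the orbit structure---for instance by enlarging $A$ with dedicated orbits for each $g\in F$ and redistributing edge labels so that the full paradoxical identity $\bigsqcup_g A_g g\sqcup\bigsqcup_g B_g g=G$ gets realised inside each $B$-orbit individually. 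The delicate combinatorial bookkeeping needed to cover every $B$-vertex while preserving both properness and the factor-graph Hall failure is the technical crux of the argument.
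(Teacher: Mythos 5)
There is a genuine gap, and it is located exactly where you flag it --- but it is not ``delicate combinatorial bookkeeping''; it is a structural defect that cannot be repaired within your construction. The graph you build ($A=G\times\{1,2\}$, $B=G\times F$, edges $((x,i),(xg,g))$) has \emph{no perfect matching at all} once $|F|\ge 3$: for any finite $Z\subseteq G$ take $Y=\bigsqcup_{g\in F}\bigl(Zg\times\{g\}\bigr)\subseteq B$, so that $|Y|=|F|\cdot|Z|$, while
\begin{equation*}
  E(Y)=\bigcup_{g\in F}\bigl(Zgg^{-1}\bigr)\times\{1,2\}=Z\times\{1,2\},
  \qquad |E(Y)|=2|Z|<|F|\cdot|Z|=|Y|,
\end{equation*}
so the right Hall condition fails and Hall's theorem kills any perfect matching. (And if you keep $|F|=2$ to avoid this, the factor graph is the complete bipartite graph on $2+2$ vertices, which \emph{does} have a perfect matching, so the example again proves nothing.) The root cause is that your construction points the paradoxical decomposition the wrong way: the decomposition gives an edge-respecting injection of \emph{two} copies of $G$ \emph{onto one} copy, so the doubled side must be the side that gets absorbed, and the single copy must be the side it is absorbed into. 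Your proposed repair (enlarging $A$ with more orbits) also risks restoring the Hall condition in the factor graph, since the whole point is to keep $|\tilde{A}|<|\tilde{B}|$ with every $\tilde{B}$-vertex still matchable upstairs.

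The paper's construction reverses your orientation: in the non-proper prototype $A=G$, $B=G\times\{1,2\}$, with $xg$ adjacent to $(x,i)$ for $g\in F$; the matching sends $(x,1)\mapsto xg$ for the unique $g$ with $x\in A_g$ and $(x,2)\mapsto xg$ for the unique $g$ with $x\in B_g$. Every $B$-vertex is covered by definition of the partitions $\{A_g\}$, $\{B_g\}$, and every $A$-vertex is covered exactly once because $G=\bigsqcup_g A_gg\sqcup\bigsqcup_g B_gg$ --- this is precisely where non-amenability does its work, and it is why no surjectivity problem arises. The factor graph is the complete bipartite graph on $1+2$ vertices, which has no perfect matching. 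Properness (which this prototype lacks, since each orbit pair carries $|F|$ edges) is then restored by blowing every vertex up into $|F|$ copies and threading the edges through a map $\varphi:F\times F\to F$ that is bijective in each argument separately; your idea of using single-edge orbit pairs to get properness is sound in itself, but it must be grafted onto the correctly oriented prototype.
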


\begin{proof}
  By the Tarski Theorem there is a paradoxical decomposition
  of $G$. This can be used to construct such a $G$-symmetric bipartite
  graph: Set $A=G$, $B=G\times\{1,2\}$, $E=\{(xg,(x,i))\mid x\in G,
  g\in F,i\in\{1,2\}\}$, and the group action is the group
  composition. Then there is a perfect matching
  \begin{equation*}
    M = \{ (xg,(x,1)) \mid g\in F, x\in A_g\}
    \cup  \{ (xg,(x,2)) \mid g\in F, x\in B_g\}.
  \end{equation*}
  But the factor graph $\tilde{A}=\{G\}$,
  $\tilde{B}=\{G\}\times\{1,2\}$, $\tilde{E}=\tilde{A}\times\tilde{B}$
  has obviously no perfect matching.

  It's easy to see that this example is not proper $G$-symmetric, so
  we have to modify it slightly. Each vertex has to be replaced by
  $|F|$ copies of itself, and the edges have to be twisted a little bit.

  Let $\varphi:F\times F\to F$ be
  bijective if any of the arguments is fixed. One can get such a
  function for instance by the group composition in a cyclic group of
  order $|F|$, and map the elements bijectively to $F$. Now we set
  $A = G\times F$, $B = G\times F\times\{1,2\}$, and 
  \begin{equation*}
    E = \{((xg,\varphi(g,h)),(x,h,i))\mid
    g,h\in F, x\in G, i\in \{1,2\}\}.
  \end{equation*}
  $G$ acts again by left multiplication on the $G$-component of $A$
  and $B$. This $G$-symmetry is proper, because
  \begin{equation*}
    ((zxg,\varphi(g,h)),(x,h,i)) = ((x'g',\varphi(g',h')),(x',h',i'))
  \end{equation*}
  for some $z\in G$ implies $x=x'$, $h=h'$, $i=i'$, and by injectivity
  of $g\mapsto\varphi(g,h)$ also $g=g'$, hence $z=e$.

  A perfect matching is given by
  \begin{align*}
    M =& \{((xg,\varphi(g,h)),(x,h,1))\mid g,h\in F, x\in A_g\}
    \cup \\
    & \{((xg,\varphi(g,h)),(x,h,2))\mid g,h\in F, x\in B_g\} \subseteq
    E.
  \end{align*}
  Since $G=\bigsqcup\limits_{g\in F}A_g=\bigsqcup\limits_{g\in F}B_g$
  any vertex of $B$, and since
  $G=\bigsqcup\limits_{g\in F}A_gg \sqcup \bigsqcup\limits_{g\in F}B_gg$
  and $h\mapsto\varphi(g,h)$ is bijective, any vertex of $A$
  belongs to exactly one edge in $M$. Hence, $(A,B,E)$ admits a
  perfect matching.

  Now take a look at the factor graph. We have
  $\tilde{A}=\{G\}\times F$, $\tilde{B}=\{G\} \times F\times\{1,2\}$,
  \begin{align*}
    \tilde{E} &=  \{((G,\varphi(g,h)),(G,h,i))\mid
    g,h\in F, i\in \{1,2\}\},
  \end{align*}
  which is the complete bipartite graph with $|F|$ and $2|F|$
  vertices. That one has obviously no perfect matching.
\end{proof}

\end{document}